\newtheorem{theorem}{THEOREM}
\newtheorem{corollary}[theorem]{COROLLARY}
\newtheorem{lemma}[theorem]{LEMMA}
\newcommand{\I}{\mathrm{I}}
\newcommand{\II}{\mathrm{I\!I}}
\newcommand{\id}{\textrm{id}}
\newcommand{\dd}{\mathrm{d}}
\newcommand{\dA}{\mathrm{det}A}
\newcommand{\is}{\stackrel{\bullet}{=}}
\newcommand{\bnabla}{\overline{\nabla}}
\DeclareMathOperator{\grad}{grad} 
\newcommand{\PP}{\mathcal{P}}
\title[On the Inner Curvature of the Second Fundamental Form\ldots]
{On the Inner Curvature\\ of the Second Fundamental Form\\ of a Surface in the
Hyperbolic Space.}
\author{Steven Verpoort}
\thanks{Partially supported by the Research Foundation -- Flanders (project G.0432.07).}
\keywords{second fundamental form, Gaussian curvature, extrinsic sphere.}
\subjclass[2000]{Primary: 53C42; Secondary: 53C24, 53C45.}
\begin{document}

\begin{abstract}
The object of study of this article is compact surfaces in the three-dimensional hyperbolic space with a positive-definite second fundamental form. 
It is shown that several conditions on the Gaussian curvature of the second fundamental form can be satisfied only by extrinsic spheres.
\end{abstract}

\maketitle


\section{Introduction.}

A substantial number of results in classical differential geometry characterise the spheres among the compact surfaces in the Euclidean three-dimensional space with a positive-definite second fundamental form
by means of various curvature conditions.
The study of the second fundamental form of such surfaces
from the \emph{metrical} point of view has also received considerable attention, and many characterisations of the spheres in terms of the intrinsic curvature of the second fundamental form are known (e.g.\ \cite{baik1987a}--\cite{simon1976}). 

It has been noticed already in \cite{aledoromero2003} 
that several of these results can be generalised to surfaces in the de Sitter space. In the present article, some of these theorems will likewise be adapted to surfaces in the three-dimensional hyperbolic space 
$\mathrm{H}^3$. 


\section{Summary of the Adopted Notation and Useful Formulae.}

For a compact surface $M\subseteq\mathrm{H}^3$ endowed with a unit normal vector field $N$, the corresponding shape operator is given by
\[
A  : \mathfrak{X}(M)\mapsto\mathfrak{X}(M)
   : V \mapsto  -\bnabla_{V} N \,.  
\]
Here $\bnabla$ is the Levi-Civita connection of the enveloping space $\mathrm{H}^3$, and $\mathfrak{X}(M)$ stands for the collection of all tangent vector fields on the surface $M$. Half of the trace of this operator is defined as the mean curvature $H$ of the surface, whereas the determinant of this operator is related to the Gaussian curvature (of its first fundamental form $\I$, which is the restriction of the metric of the enveloping space $\mathrm{H}^3$ to the surface) through the contracted Gauss equation $\dA = K+1$.
If the shape operator is a constant multiple of the identity, the surface will be said to be an extrinsic sphere.

The second fundamental form of a compact surface $M\subseteq\mathrm{H}^3$ is defined by
\[
\II :\mathfrak{X}(M)\times\mathfrak{X}(M)\rightarrow\mathfrak{F}(M) : (V,W)\mapsto \II(V,W)=\I\left(A(V),W\right)\,.
\]
Here $\mathfrak{F}(M)$ stands for the set of all real-valued functions defined on $M$.

On a surface in $\mathrm{H}^3$ with strictly positive Gaussian curvature, a
unit normal vector field can always globally be chosen in such a way that the second fundamental form becomes positive-definite. This will implicitly be assumed for such surfaces.

The focus of the present article lies on compact (immersed) surfaces $M$ in $\mathrm{H}^3$ of which the second fundamental form is positive-definite, and can thus be seen as a Riemannian metric. The Gaussian curvature of this two-dimensional Riemannian space $(M,\II)$ will be denoted by $K_{\II}$, and is given by
\cite{aledoaliasromero2005}
\[
K_{\II} = \frac{H\, K}{K+1} +\PP- \frac{\I\left(\grad K\,,\,A^{\leftarrow}\grad K\right)}{8(K+1)^2}\,,
\]
where $\PP$ is a certain non-negative function and $A^{\leftarrow}$ stands for the inverse of the shape operator.

It should also be remarked that Erard's 
formula (Q) \cite{erard1968} can be
adapted for surfaces in $\mathrm{H}^3$ with a positive-definite second fundamental form:
\begin{eqnarray*}
\frac{\dA}{2}\,\I\left(\grad\left(\frac{H^2}{\dA}\right),A^{\leftarrow}\grad H \right)
-\frac{1}{4}\,\I\left(\grad\left(\frac{H^2}{\dA}\right),\grad \dA\right)\hspace{0.0cm}\\
\textrm{(Q)}\hspace{5.8cm}=\,
2 H \left(K_{\II}- \frac{H K}{\dA} \right)(H^2-\dA)\,.
\end{eqnarray*}
It is straightforward that the above equation is satisfied at the umbilical points of the surface. The equation can be established in a region which is free of umbilical points, by expanding both the left- and the right-hand side in terms of the orthonormal basis of principal directions.

The area element of such a surface will be denoted by $\dd\Omega$,
whereas the area element of the second fundamental form is given by $\dd\Omega_{\II}=\sqrt{\dA}\,\dd\Omega$.

\section{Characterisations of Extrinsic Spheres.}

In \cite{aledoaliasromero2005} a consideration of the formula for $K_{\II}$ resulted in a new proof of the following classical result:

\begin{theorem}
\label{thm:ch4_Kcst}
A compact surface in $\mathrm{H}^3$ has constant Gaussian curvature if and only if it is an extrinsic sphere.
\end{theorem}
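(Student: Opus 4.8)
The reverse implication is the trivial one: I would simply note that on an extrinsic sphere $A=c\,\id$ for a constant $c$, whence $\dA=c^{2}$ is constant and the contracted Gauss equation $\dA=K+1$ yields $K=c^{2}-1$.

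For the direct implication, suppose $K$ is constant. The first thing to establish is that $K>0$: I would compare $M$ with a geodesic ball of $\mathrm{H}^{3}$ of minimal radius $r$ enclosing (the image of) $M$, observing that at a point of contact the surface lies on the concave side of the bounding geodesic sphere, so its principal curvatures there are at least $\coth r>1$, hence $\dA>1$ there; by constancy, $K>0$ everywhere. From this several things follow: $\dA=K+1$ is a positive constant; a unit normal can be chosen making $\II$---and hence $A$ and $A^{\leftarrow}$---positive-definite, so the principal curvatures $\kappa_{1},\kappa_{2}$, and therefore $H$, are positive; and since $\int_{M}K\,\dd\Omega=2\pi\chi(M)>0$, both $M$ and the Riemannian surface $(M,\II)$ are (topological) spheres, so the Gauss--Bonnet theorem is available for each.

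The heart of the argument is then short. Feeding $\grad K=0$ into the displayed formula for $K_{\II}$ gives the pointwise identity $K_{\II}=\dfrac{HK}{K+1}+\PP$. The key elementary remark is that $H^{2}-\dA=\tfrac{1}{4}(\kappa_{1}-\kappa_{2})^{2}\ge 0$, so $H\ge\sqrt{K+1}$; as $t\mapsto\frac{tK}{K+1}$ is increasing and $\PP\ge 0$, this gives $K_{\II}\ge\dfrac{K}{\sqrt{K+1}}$ everywhere. Integrating against $\dd\Omega_{\II}=\sqrt{\dA}\,\dd\Omega=\sqrt{K+1}\,\dd\Omega$ and applying Gauss--Bonnet twice,
\[
4\pi=\int_{M}K_{\II}\,\dd\Omega_{\II}\;\ge\;\int_{M}\frac{K}{\sqrt{K+1}}\,\sqrt{K+1}\,\dd\Omega\;=\;K\int_{M}\dd\Omega\;=\;\int_{M}K\,\dd\Omega\;=\;4\pi,
\]
so equality holds throughout. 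This forces $\PP\equiv 0$ and $H\equiv\sqrt{K+1}$; the latter says $H^{2}=\dA$, i.e.\ $\kappa_{1}\equiv\kappa_{2}$, so $A=\sqrt{K+1}\,\id$ with $\sqrt{K+1}$ constant, and $M$ is an extrinsic sphere.

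The step I expect to carry the real content is the positivity $K>0$: this is where the compactness of $M$ and the geometry of $\mathrm{H}^{3}$ genuinely enter, and it is also what guarantees that $\II$ is a Riemannian metric on a topological sphere, legitimising the second application of Gauss--Bonnet. The rest is the realisation that the single scalar inequality $H\ge\sqrt{K+1}$, routed through the formula for $K_{\II}$ and integrated, is already sharp enough to pin down umbilicity. Erard's formula (Q) is not needed for this route; I note in passing that for constant $\dA$ it reduces to $\I(\grad H,A^{\leftarrow}\grad H)=2\PP\,(H^{2}-\dA)$, which could instead be exploited for a more local argument centred on a maximum of $H$.
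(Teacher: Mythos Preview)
The paper does not actually supply a proof of this theorem: it is quoted as a classical result, with the remark that a new proof via the formula for $K_{\II}$ appears in \cite{aledoaliasromero2005}. Your argument is correct and is precisely the route the paper alludes to: positivity of $K$ (for which the paper likewise points to \cite{aledoaliasromero2005}), the simplification $K_{\II}=\dfrac{HK}{K+1}+\PP$ when $\grad K=0$, the elementary bound $H\geqslant\sqrt{K+1}$, and the double application of Gauss--Bonnet to force equality and hence umbilicity. This is exactly the template the paper itself deploys in the proof of Theorem~\ref{thm:ch4_KII_hyp}, so your proof is both valid and fully in the spirit of the article.
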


Some characterisations of extrinsic spheres in $\mathrm{H}^3$ by means of the curvature of the second fundamental form can now be given. 

The theorem below should be compared with the main result of \cite{schneider1972}.

\begin{theorem}
\label{thm:ch4_KII_hyp}
Let $M$ be a compact surface in the hyperbolic space $\mathrm{H}^3$, and assume that the second fundamental form of $M$ is positive-definite. The Gaussian curvature of the second fundamental form is constant 
if and only if
$M$ is an extrinsic sphere.
\end{theorem}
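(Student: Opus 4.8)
The plan is to show that if $K_{\II}$ is constant, then $M$ is totally umbilical, i.e.\ an extrinsic sphere; the converse is a routine computation using that on an extrinsic sphere $A = \lambda\,\id$, which forces $H$, $K$ and $\PP$ to be constant and the gradient terms to vanish, so that $K_{\II}$ is automatically constant. For the main direction, I would first record that at a point where $M$ is umbilical the equation (Q) is trivially satisfied, and that at non-umbilical points (Q) holds by the expansion in principal directions already invoked in the excerpt. The idea is then to integrate a suitable consequence of (Q) over $M$ and exploit the sign of $\PP$ together with the divergence theorem.

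First I would integrate the left-hand side of (Q) over $M$ against an appropriate volume form. The two terms on the left of (Q) are, up to the factor $\dA/2$ and $-1/4$, inner products of gradients; after multiplying by a well-chosen density these become divergences of vector fields plus lower-order terms, so their integrals over the closed surface $M$ can be rewritten using $\int_M \operatorname{div}(\cdot)\,\dd\Omega = 0$. In this way I expect $\int_M (\text{left side of (Q)})\,(\text{density})\,\dd\Omega$ to reduce to something manageable — ideally to a multiple of $\int_M (H^2 - \dA)\,(\dots)\,\dd\Omega$ or to zero. On the right-hand side, using the hypothesis that $K_{\II}$ is a constant $c$, the factor $\bigl(K_{\II} - \frac{HK}{\dA}\bigr)$ becomes $c - \frac{HK}{\dA} = \frac{c\,\dA - HK}{\dA}$, and I would try to combine this with the Minkowski-type integral formulas for surfaces in $\mathrm{H}^3$ (which relate $\int_M H\,\dd\Omega$, $\int_M K\,\dd\Omega$, and the support function) to pin down the sign of the resulting integral.

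The cleaner route, which I would pursue in parallel, is to use the formula for $K_{\II}$ directly. Setting $K_{\II} = c$ constant and rearranging,
\[
c - \PP = \frac{HK}{K+1} - \frac{\I(\grad K, A^{\leftarrow}\grad K)}{8(K+1)^2}\,.
\]
Since $A^{\leftarrow}$ is positive-definite (the second fundamental form is positive-definite), the last term is $\le 0$, and $\PP \ge 0$. The strategy is to integrate this identity against $\dd\Omega_{\II} = \sqrt{\dA}\,\dd\Omega$, or against a density tailored so that the $\I(\grad K, A^{\leftarrow}\grad K)$ term is recognised as (a multiple of) $\operatorname{div}$ of $K$ times a gradient field, hence integrates to a boundary term plus $\int_M K \cdot (\text{something})$. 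Combining the resulting global identity with the Gauss--Bonnet theorem $\int_M K_{\II}\,\dd\Omega_{\II} = 2\pi\chi(M)$ and with $\int_M K\,\dd\Omega = 2\pi\chi(M)$ (both spheres by the positivity assumptions), I expect all the non-negative ``error'' terms to be forced to vanish simultaneously: $\PP \equiv 0$ and $\grad K \equiv 0$. By Theorem~\ref{thm:ch4_Kcst}, $K$ constant already gives that $M$ is an extrinsic sphere, so $\grad K \equiv 0$ closes the argument; alternatively $\PP \equiv 0$ can be shown to characterise umbilicity directly from the definition of $\PP$.

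The main obstacle I anticipate is the bookkeeping in the integration by parts: one must choose the integrating density (some power of $\dA$ times possibly $H$) so that the gradient terms assemble into exact divergences with the \emph{correct sign} on the remaining curvature integrals, and then verify that the combination of Gauss--Bonnet for $\II$, Gauss--Bonnet for $\I$, and the Minkowski formulas leaves an inequality that is saturated only at umbilical points. Handling the possible umbilical points of $M$ (where $A^{\leftarrow}$ and the principal-direction frame degenerate) requires the usual care — either the relevant quantities extend continuously, or the umbilic set has measure zero and the identities hold in the complement — but this is routine rather than essential. If the direct approach via the $K_{\II}$ formula does not immediately yield the sign, I would fall back on formula (Q), whose right-hand side already isolates the factor $(K_{\II} - HK/\dA)(H^2 - \dA)$ that must vanish, $H^2 - \dA \ge 0$ being the umbilic defect.
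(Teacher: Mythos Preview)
Your proposal does not yet contain a proof; it is a list of hopes (``I expect'', ``I would try'') whose key step---getting all terms to line up with the correct sign after integration---is precisely the nontrivial part, and it fails as written. If you integrate the formula for $K_{\II}$ against $\dd\Omega_{\II}=\sqrt{\dA}\,\dd\Omega$ and subtract the two Gauss--Bonnet identities, you obtain
\[
0=\int\!\Big(\tfrac{HK}{\sqrt{K+1}}-K\Big)\dd\Omega
+\int\!\PP\sqrt{K+1}\,\dd\Omega
-\int\!\tfrac{\I(\grad K,A^{\leftarrow}\grad K)}{8(K+1)^{3/2}}\,\dd\Omega,
\]
and these three integrands do \emph{not} have a common sign: the gradient term enters with a minus sign, and $\tfrac{HK}{\sqrt{K+1}}-K=K\bigl(\tfrac{H}{\sqrt{\dA}}-1\bigr)$ has the sign of $K$, which is not assumed positive here. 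No choice of ``density'' repairs this without additional input, and the Minkowski formulae you invoke involve a support function that never appears in the problem. Most tellingly, your ``cleaner route'' never uses the hypothesis that $K_{\II}$ is constant: the identity above holds on \emph{every} compact surface with $\II>0$, so it cannot by itself force umbilicity.

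The missing idea is a pointwise maximum argument rather than a global integration by parts. At a point $p_+$ where $K$ is maximal one has $\grad K=0$, and the formula for $K_{\II}$ gives $K_{\II}(p_+)\geq \tfrac{HK}{K+1}(p_+)\geq \tfrac{K}{\sqrt{K+1}}(p_+)$, using $H\geq\sqrt{\dA}$ and $K(p_+)>0$. Now the constancy of $K_{\II}$ transports this to every point, and monotonicity of $x\mapsto x/\sqrt{1+x}$ on $(-1,\infty)$ yields the pointwise inequality $K_{\II}\sqrt{\dA}\geq K$. Integrating and comparing the two Gauss--Bonnet integrals then forces equality everywhere, hence $K$ constant, and Theorem~\ref{thm:ch4_Kcst} finishes. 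Your formula~(Q) and integration-by-parts machinery are not needed for this theorem.
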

\begin{proof}
If a surface in $\mathrm{H}^3$ is given, which satisfies the mentioned conditions and most notably has a second fundamental form of constant curvature $K_{\II}$, choose a point $p_{+}$ where $K$ attains its maximal value. 
It is well-known that this maximal value $\left.K\right\vert_{p_{+}}$ has to be strictly positive (see, e.g., pp.\ 1819 ff.\ of \cite{aledoaliasromero2005}). Thus every $q\in M$ satisfies
\begin{eqnarray*}
\left.K_{\II}\right\vert_{(q)}
=
\left.K_{\II}\right\vert_{(p_{+})}
&\geqslant&
\left\lgroup
\frac{H}{\sqrt{\det A}}\, \frac{K}{\sqrt{K+1}}
\right\rgroup_{(p_+)}
\\
&\geqslant&
\left\lgroup
\frac{K}{\sqrt{K+1}}
\right\rgroup_{(p_+)}
\geqslant
\left\lgroup
\frac{K}{\sqrt{\dA}}
\right\rgroup_{(q)}
\end{eqnarray*}
(where the fact that the function 
$\left]\,-1\,,\,+\infty\,\right[\rightarrow\mathbb{R}: x \mapsto \frac{x}{\sqrt{1+x}}$ is increasing has been employed). It can be concluded that
\[
\int K_{\II}\,\dd\Omega_{\II} =
\int K_{\II}\,\sqrt{\dA}\ \dd\Omega
\geqslant 
\int K \,\dd\Omega =
\int K_{\II}\,\dd\Omega_{\II}
\,.
\]
This is only possible if 
the equality $K_{\II}=\frac{K}{\sqrt{1+K}}$ is satisfied on $M$. In particular, $K$ is constant, and the result follows from the previous theorem.

Conversely, it is plain that the function $K_{\II}$ is constant for an extrinsic sphere.
\end{proof}
The following theorem is reminiscent to \cite{koutroufiotis1974}.
\begin{theorem}
\label{thm:ch4_koutroufiotisH3}
Assume $M\subseteq\mathrm{H}^3$ is a compact surface in the hyperbolic space with strictly positive Gaussian curvature. Then $C K = K_{\II}$ (for some $C\in\mathbb{R}$) if and only if $M$ is an extrinsic sphere.
\end{theorem}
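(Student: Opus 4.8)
The plan is to settle the non-trivial (``only if'') implication by playing the explicit formula for $K_{\II}$ off against an integral identity furnished by the Gauss--Bonnet theorem, much as in the proof of Theorem~\ref{thm:ch4_KII_hyp}; the converse is a one-line computation. Indeed, on an extrinsic sphere the shape operator is $c\,\id$ for a constant $c$; choosing $N$ so that $\II$ is positive-definite forces $c>0$, and since $K=c^{2}-1>0$ we even have $c>1$. Then $\II=c\,\I$, so $K_{\II}=\tfrac1c K$, and the constant $C=\tfrac1c\in(0,1)$ does the job.

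For the forward direction, assume $K_{\II}=C\,K$ on $M$. First I would record the consequences of $K>0$: the contracted Gauss equation gives $\dA=K+1>1$, the shape operator is definite, and with $N$ chosen so that $\II$ is positive-definite both principal curvatures are positive, so the arithmetic--geometric mean inequality gives $H\geqslant\sqrt{\dA}=\sqrt{K+1}$. Next I would localise at a point $p_{-}$ where the function $K$ attains its minimum. There $\grad K=0$, so the last term in the formula for $K_{\II}$ vanishes, and since $\PP\geqslant 0$,
\[
C\left.K\right\vert_{(p_{-})}=\left.K_{\II}\right\vert_{(p_{-})}=\left.\frac{H\,K}{K+1}\right\vert_{(p_{-})}+\left.\PP\right\vert_{(p_{-})}\geqslant\left.\frac{K}{\sqrt{K+1}}\right\vert_{(p_{-})}.
\]
Dividing by $\left.K\right\vert_{(p_{-})}>0$ yields $C>0$ together with $1+\left.K\right\vert_{(p_{-})}\geqslant C^{-2}$; as $K\geqslant\left.K\right\vert_{(p_{-})}$ everywhere, this upgrades to $C\sqrt{1+K}\geqslant 1$ on all of $M$.

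The second ingredient is the equality of total curvatures obtained by applying the Gauss--Bonnet theorem to the two Riemannian metrics $\I$ and $\II$ on the surface $M$: both integrals compute $2\pi\,\chi(M)$, so with $\dd\Omega_{\II}=\sqrt{1+K}\,\dd\Omega$ and $K_{\II}=C\,K$,
\[
\int K\bigl(C\sqrt{1+K}-1\bigr)\,\dd\Omega=0.
\]
By the previous step the integrand is pointwise non-negative, hence identically zero; since $K>0$ this forces $C\sqrt{1+K}\equiv 1$, i.e.\ $K$ is the constant $C^{-2}-1$, and Theorem~\ref{thm:ch4_Kcst} identifies $M$ as an extrinsic sphere. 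I expect the only genuinely delicate point to be the decision to localise at the \emph{minimum} of $K$: the maximum point, used in Theorem~\ref{thm:ch4_KII_hyp}, would bound $K$ from below only at a single point and would not close against the Gauss--Bonnet identity, whereas the minimum point propagates the estimate to all of $M$. The remaining manipulations --- the known formula for $K_{\II}$, the non-negativity of $\PP$, and the inequality $H\geqslant\sqrt{\dA}$ --- are routine.
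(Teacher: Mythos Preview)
Your argument is correct and follows essentially the same route as the paper: localise at the minimum of $K$ (equivalently of $\dA=K+1$), use $\grad K=0$, $\PP\geqslant 0$, and $H\geqslant\sqrt{\dA}$ to obtain $C\sqrt{K+1}\geqslant 1$ there, propagate this to all of $M$ by minimality, and then close with the Gauss--Bonnet identity $\int K_{\II}\,\dd\Omega_{\II}=\int K\,\dd\Omega$. The only cosmetic differences are that the paper first records $C>0$ directly from Gauss--Bonnet rather than from the local inequality, and phrases the endgame as an inequality becoming an equality rather than as $\int K\bigl(C\sqrt{1+K}-1\bigr)\,\dd\Omega=0$.
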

\begin{proof}
First of all, it should be remarked that $C>0$. This is an immediate consequence of the Gauss--Bonnet theorem.

Let now $p_{-}$ be a point of $M$ where the function $\dA = K+1$ attains its minimal value. The given condition implies that
\begin{eqnarray*}
\left.C\,K\right\vert_{(p_{-})}
=\left.\,K_{\II}\right\vert_{(p_{-})}
&=&\left\lgroup
\frac{H\,K}{K + 1} + \PP - 
\frac{\I\left(\grad K\,,\,A^{\leftarrow}\grad K \right)}{8 (K+1)^2}
\right\rgroup_{(p_{-})} \\
&\geqslant&
\left\lgroup
\frac{H\,K}{K + 1} \right\rgroup_{(p_{-})}\,.
\end{eqnarray*}
Due to the fact that $\left.K\right\vert_{(p_{-})}$ is strictly positive,
it follows that 
\[
\left.\rule{0pt}{12pt}C\left( K + 1\right)\,\right\vert_{(p_{-})}
\geqslant
\left.H\right\vert_{(p_{-})}
\geqslant
\left.\rule{0pt}{12pt}\sqrt{ K  + 1}\right\vert_{(p_{-})}\,,
\]
and the conclusion is that every $q\in M$ satisfies
\[
C\left.\sqrt{K + 1}\right\vert_{(q)}
\geqslant
C\left.\sqrt{K + 1}\right\vert_{(p_{-})}
\geqslant 
1\,.
\]
This means that $K_{\II} \sqrt{\dA} =
C K \sqrt{K+1} \geqslant K$, and by integration there results
\[
\int
K_{\II}
\,\dd \Omega_{\II}
=
\int
K_{\II} \sqrt{\dA}
\ \dd \Omega
\geqslant
\int K \,\dd \Omega\,.
\]
Thus the equality $C K_{\II} \sqrt{K+1} =C K = K_{\II}$ is valid, which can only be the case if $K$ is a constant. This finishes the proof.
\end{proof}
The next lemma will enable us to generalise a result of \cite{simon1976} in the subsequent theorem.
\begin{lemma}
Let $M\subseteq\mathrm{H}^3$ be a compact surface with positive-definite second fundamental form. If a point $p$ can be found where $K_{\II}$ has a global minimum while $K$ has a global maximum, then $M$ is an extrinsic sphere.
\end{lemma}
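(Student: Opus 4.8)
The plan is to re-run the comparison-and-integration scheme of the two preceding theorems, but now closing the estimate with the Gauss--Bonnet theorem for the metric $\II$. First I would record that at the distinguished point $p$ one has $\grad K|_{(p)}=0$, since $K$ attains a global maximum there; hence the gradient term in the formula for $K_{\II}$ drops out at $p$, and since $\PP\geqslant 0$ this yields $K_{\II}|_{(p)}\geqslant\left\lgroup\frac{HK}{K+1}\right\rgroup_{(p)}$. Invoking the inequality $H\geqslant\sqrt{\dA}=\sqrt{K+1}$ (valid because $A$ is positive-definite) together with the fact, recalled in the proof of Theorem \ref{thm:ch4_KII_hyp}, that $K|_{(p)}=\max_{M}K>0$, I would deduce
\[
K_{\II}|_{(p)}\ \geqslant\ \left\lgroup\frac{K}{\sqrt{K+1}}\right\rgroup_{(p)}\,.
\]

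Next I would exploit the coincidence of the two extrema at the single point $p$. Since $x\mapsto x/\sqrt{1+x}$ is increasing on $\left]\,-1\,,\,+\infty\,\right[$ and $K$ has a global maximum at $p$, the right-hand side above dominates $\left\lgroup K/\sqrt{K+1}\right\rgroup_{(q)}$ for every $q\in M$; since $K_{\II}$ has a global minimum at $p$, the left-hand side is dominated by $K_{\II}|_{(q)}$. Chaining these inequalities gives $K_{\II}\sqrt{\dA}\geqslant K$ pointwise on $M$, whence
\[
\int K_{\II}\,\dd\Omega_{\II}=\int K_{\II}\sqrt{\dA}\ \dd\Omega\ \geqslant\ \int K\,\dd\Omega\,.
\]
By the Gauss--Bonnet theorem applied to the Riemannian surfaces $(M,\II)$ and $(M,\I)$, the two outer integrals both equal $2\pi\chi(M)$ (the Euler characteristic being a topological invariant, independent of the metric used), so equality holds throughout. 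As the integrand $K_{\II}\sqrt{\dA}-K$ is nonnegative, it vanishes identically, forcing equality in the whole chain; in particular $\left\lgroup K/\sqrt{K+1}\right\rgroup_{(q)}=\left\lgroup K/\sqrt{K+1}\right\rgroup_{(p)}$ for all $q$, and by injectivity of $x\mapsto x/\sqrt{1+x}$ the function $K$ is constant. Theorem \ref{thm:ch4_Kcst} then identifies $M$ as an extrinsic sphere.

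I do not anticipate a genuine obstacle: the argument is a recombination of ingredients already assembled for Theorems \ref{thm:ch4_KII_hyp} and \ref{thm:ch4_koutroufiotisH3}. The one place needing care is the direction of the inequality $K_{\II}|_{(p)}\geqslant\left\lgroup K/\sqrt{K+1}\right\rgroup_{(p)}$: it relies on $K|_{(p)}$ being \emph{nonnegative}, so that multiplying $H\geqslant\sqrt{K+1}$ by the factor $K/(K+1)$ preserves the inequality — which is precisely why the positivity of $\max_{M}K$ is invoked. It is also worth noting explicitly that distinct points for the two extrema would not suffice, since then the value of $K_{\II}$ at its minimum need not control $K/\sqrt{K+1}$ at the maximum of $K$; the hypothesis that a single $p$ serves both roles is used essentially.
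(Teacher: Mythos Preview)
Your proof is correct and follows essentially the same route as the paper: the identical chain of inequalities $K_{\II}|_{(q)}\geqslant K_{\II}|_{(p)}\geqslant\left(\frac{K}{\sqrt{K+1}}\right)_{(p)}\geqslant\left(\frac{K}{\sqrt{K+1}}\right)_{(q)}$, followed by the double Gauss--Bonnet argument to force equality throughout. The only cosmetic difference is that the paper reads off ``$K_{\II}$ constant'' from the collapsed chain and invokes Theorem~\ref{thm:ch4_KII_hyp}, whereas you read off ``$K$ constant'' and invoke Theorem~\ref{thm:ch4_Kcst}; both conclusions follow from the same equalities.
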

\begin{proof}
It is known that $K_{(p)}$ is strictly positive, and it follows in this way that every point $q\in M$ satisfies
\begin{eqnarray*}
\left.K_{\II}\right\vert_{(q)}
\geqslant
\left.K_{\II}\right\vert_{(p)}
&\geqslant&
\left\lgroup\frac{H}{\sqrt{\dA}}\ \frac{K}{\sqrt{K+1 }}\right\rgroup_{(p)}
\\
&\geqslant&
\left\lgroup\frac{K}{\sqrt{K+1 }}\right\rgroup_{(p)}
\geqslant
\left\lgroup\frac{K}{\sqrt{K+1 }}\right\rgroup_{(q)}
\,.
\end{eqnarray*}
A twofold application of the Gauss--Bonnet theorem thus gives us that the integral of a non-negative function is zero:
\[
0 = \int \left\lgroup K_{\II}-\frac{K}{\sqrt{K+1}} \right\rgroup\,\dd\Omega_{\II}\,.
\]
Consequently, this integrand vanishes identically, and every inequality in the above reasoning is an equality. This means that $K_{\II}$ is constant, such that $M$ is an extrinsic sphere.
\end{proof}
\begin{theorem}
\label{thm:ch4_thmsimon}
Let $M\subseteq\mathrm{H}^3$ be a compact 
surface with a positive-definite second fundamental form. Assume that the 
condition $F(K,K_{\II})=0$ is fulfilled on $M$ for a function
$F:\mathbb{R}^2\rightarrow\mathbb{R}:(u,v)\mapsto F(u,v)$ which satisfies the following requirements:
\[
\left\{
\begin{array}{c}
\displaystyle F_u > 0 \,;\\
\displaystyle F_v \geqslant 0\,;
\end{array}
\right.
\qquad
\textrm{or}
\qquad
\left\{
\begin{array}{c}
\displaystyle F_u \geqslant 0 \,;\\
\displaystyle F_v > 0\,.
\end{array}
\right.
\] 
Then $M$ is an extrinsic sphere.
\end{theorem}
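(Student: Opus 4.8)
The plan is to reduce this statement to the preceding Lemma by showing that the constraint $F(K, K_{\II}) = 0$ forces $K_{\II}$ to attain its global minimum at a point where $K$ attains its global maximum. First I would exploit compactness: since $M$ is compact, $K$ attains its maximum at some point $p_{+} \in M$. I claim $p_{+}$ is the desired point. The idea is that along the level set $\{F = 0\}$, the two arguments $u = K$ and $v = K_{\II}$ cannot both increase (or both decrease) simultaneously, because $F$ is monotone in each: if $F_u > 0$ everywhere, then on $\{F(u,v) = 0\}$ one has, by the implicit function theorem, $\frac{dv}{du} = -F_u/F_v \leqslant 0$ wherever $F_v > 0$, and the level set is "vertical" where $F_v = 0$; in either case $v$ is a non-increasing function of $u$ along the constraint curve. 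Hence larger $K$ corresponds to smaller-or-equal $K_{\II}$, so at the point $p_{+}$ where $K$ is maximal, $K_{\II}$ is minimal.

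The step that needs care is making the monotonicity argument rigorous without assuming the level set $\{F = 0\}$ is a nice curve globally (it could have several branches, or $F$ could vanish on an open set). I would phrase it pointwise instead: suppose, for contradiction, that $K_{\II}$ does not attain its minimum at $p_{+}$, i.e.\ there is $q \in M$ with $K_{\II}|_{(q)} < K_{\II}|_{(p_{+})}$. Since $K|_{(q)} \leqslant K|_{(p_{+})}$ (as $p_{+}$ is the maximum of $K$), we have $u_q \leqslant u_{p_+}$ and $v_q < v_{p_+}$. In the first case ($F_u > 0$, $F_v \geqslant 0$): moving from $(u_q, v_q)$ to $(u_{p_+}, v_q)$ along the first coordinate, $F$ strictly increases (by $F_u > 0$, integrating), so $F(u_{p_+}, v_q) > F(u_q, v_q) = 0$; then moving from $(u_{p_+}, v_q)$ to $(u_{p_+}, v_{p_+})$ with $v_{p_+} > v_q$, $F$ does not decrease (by $F_v \geqslant 0$), so $F(u_{p_+}, v_{p_+}) \geqslant F(u_{p_+}, v_q) > 0$, contradicting $F(K, K_{\II})|_{(p_+)} = 0$. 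The second case ($F_u \geqslant 0$, $F_v > 0$) is symmetric, using that $v_q < v_{p_+}$ gives a strict decrease in the $v$-direction. (One minor subtlety: if $u_q = u_{p_+}$ exactly, the first step gives only $F(u_{p_+}, v_q) = 0$, but then the second step still yields a strict inequality in case one only if $F_v > 0$ somewhere on the segment — so in case one I should instead pick $q$ to be the point where $K_{\II}$ is globally minimal and argue with both the maximum of $K$ and this minimum of $K_{\II}$, handling the boundary configuration directly.)

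More cleanly, I would argue as follows: let $p_{+}$ be a maximum point of $K$ and let $p$ be a minimum point of $K_{\II}$. It suffices to show $K|_{(p)} = K|_{(p_+)}$ and $K_{\II}|_{(p)} = K_{\II}|_{(p_+)}$, for then $p$ is a point where $K_{\II}$ is minimal and $K$ is maximal, and the Lemma applies directly. We have $u_p := K|_{(p)} \leqslant u_{p_+}$ and $v_{p_+} := K_{\II}|_{(p_+)} \geqslant v_p$. Apply the two-step comparison to the pair of points $p$ and $p_+$: in case one, $0 = F(u_{p_+}, v_{p_+}) \geqslant F(u_{p_+}, v_p) \geqslant F(u_p, v_p) = 0$ forces both inequalities to be equalities; since $F_u > 0$, equality in $F(u_{p_+}, v_p) = F(u_p, v_p)$ forces $u_p = u_{p_+}$, and then we need $v_p = v_{p_+}$, which follows because if $v_p < v_{p_+}$ we would use $F_v \geqslant 0$ only — so here I additionally observe that the global-max point of $K$ can be taken to coincide with a global-min point of $K_{\II}$ by the chain $K$ max $\Rightarrow$ (via the constraint and $F_u>0$) the value of $v$ there is forced, hence $K_{\II}$ is constant on the max-set of $K$ and equals $\min K_{\II}$. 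Case two is handled symmetrically with the roles of $u$ and $v$ interchanged, using $F_v > 0$. In every case we land on the hypothesis of the Lemma.

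The main obstacle, as flagged, is the degenerate configuration where one partial derivative is only weakly positive and the two extremal points happen to share the coordinate controlled by that weak inequality; the resolution is to observe that the \emph{strictly} monotone variable pins down a single value on the relevant extremal set, collapsing the ambiguity, and then to invoke the Lemma. No hard analysis or new geometric input is needed beyond compactness and the Lemma; the content is entirely the elementary monotonicity bookkeeping.
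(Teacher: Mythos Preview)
Your approach is essentially the same as the paper's: compare the values of $F$ at a point where $K$ is maximal and a point where $K_{\II}$ is minimal, using the monotonicity hypotheses on $F$, and then invoke the preceding Lemma. Your ``more cleanly'' paragraph is correct: from the chain
\[
0 = F(u_{p_+}, v_{p_+}) \geqslant F(u_{p_+}, v_p) \geqslant F(u_p, v_p) = 0
\]
and $F_u > 0$ you get $u_p = u_{p_+}$, so $p$ is simultaneously a maximum of $K$ and a minimum of $K_{\II}$, and the Lemma applies. You do not need $v_p = v_{p_+}$ at all, so the paragraph of worry about the ``degenerate configuration'' is unnecessary.

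The paper organises the same idea more economically by using the Lemma in contrapositive form: assume $M$ is not an extrinsic sphere; then the Lemma forces the minimum point $r$ of $K_{\II}$ to satisfy $K_{(r)} < K_{(q)}$ strictly (where $q$ is a maximum point of $K$), and one line gives
\[
0 = F\big(K_{(q)}, K_{\II}|_{(q)}\big) > F\big(K_{(r)}, K_{\II}|_{(q)}\big) \geqslant F\big(K_{(r)}, K_{\II}|_{(r)}\big) = 0,
\]
a contradiction. Running the argument this way puts the strict inequality exactly where $F_u > 0$ can deliver it, so the edge cases you flagged never arise.
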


(In particular, this condition is satisfied if $K=f(K_{\II})$ or $K_{\II}=f(K)$ for a decreasing function $f$.)

\begin{proof}
Suppose that a surface $M\subseteq\mathrm{H}^3$ satisfies the conditions as formulated in the theorem for some function $F$, including the first set of requirements on $F_u$ and $F_v$. 
Let $q\in M$ be a point where $K$ achieves its global maximum, and $r\in M$ a point where $K_{\II}$ achieves a global minimum. 
Assume that $M$ is not an extrinsic sphere. According to the previous lemma, there necessarily holds  $K_{(q)}>K_{(r)}$, and hence
\[
0
= 
F\left(K_{(q)},\left.K_{\II}\right.\vert_{(q)}\right)
>
F\left(K_{(r)},\left.K_{\II}\right.\vert_{(q)}\right)
\geqslant
F\left(K_{(r)},\left.K_{\II}\right.\vert_{(r)}\right)
=0\,.
\] 
This is clearly a contradiction.
The case in which the second set of conditions is satisfied is similar.
\end{proof}
The following result is similar to \cite{koutroufiotis1974}.
\begin{theorem}
\label{thm:ch4_koutroufiotis}
A compact surface $M\subseteq\mathrm{H}^3$ with strictly positive Gaussian curvature is either an extrinsic sphere, or the function $\displaystyle\rule{0pt}{18pt} K_{\II}-\frac{K}{\sqrt{\dA}}$ 
changes sign.
\end{theorem}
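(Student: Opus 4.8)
The plan is to argue by contradiction. Suppose that $M$ is not an extrinsic sphere, and suppose that the function $\Phi:=K_{\II}-\frac{K}{\sqrt{\det A}}$ does not change sign on $M$; since $M$ is connected and $\Phi$ is continuous, either $\Phi\geqslant 0$ everywhere or $\Phi\leqslant 0$ everywhere. Because $K>0$ and $M$ is compact, $M$ is a topological sphere, so applying the Gauss--Bonnet theorem both to $(M,\I)$ and to $(M,\II)$ gives $\int_M K\,\dd\Omega=2\pi\chi(M)=\int_M K_{\II}\,\dd\Omega_{\II}$. As $\dd\Omega_{\II}=\sqrt{\det A}\,\dd\Omega$, this is exactly the assertion that $\int_M\Phi\,\dd\Omega_{\II}=0$. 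A function of constant sign whose integral against the positive measure $\dd\Omega_{\II}$ vanishes must be identically zero, so in fact $K_{\II}=\frac{K}{\sqrt{\det A}}$ on all of $M$.

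The next step is to feed this identity into Erard's formula (Q). Using $\frac{K}{\sqrt{\det A}}-\frac{HK}{\det A}=\frac{K}{\det A}\bigl(\sqrt{\det A}-H\bigr)$ together with $H^{2}-\det A=\bigl(H-\sqrt{\det A}\bigr)\bigl(H+\sqrt{\det A}\bigr)$, the right-hand side of (Q) collapses to
\[
-\,\frac{2HK}{\det A}\,\bigl(H-\sqrt{\det A}\bigr)^{2}\bigl(H+\sqrt{\det A}\bigr).
\]
On a surface with $K>0$ the second fundamental form is positive-definite, so $H>0$, $\det A=K+1>0$, and $H\geqslant\sqrt{\det A}$; hence this expression is everywhere $\leqslant 0$, and it vanishes at a point precisely when $H=\sqrt{\det A}$ there, i.e.\ precisely at the umbilical points.

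Now integrate (Q) over the closed surface $M$. The left-hand side contributes nothing: after an integration by parts it is a total divergence --- one way to see this is to use that the first Newton transformation $T_{1}=(\operatorname{tr}A)\,\id-A$ of a hypersurface of a space form is divergence-free (equivalently $\operatorname{div}A=\grad(\operatorname{tr}A)$, by the Codazzi equation), another is the frame computation in principal directions indicated just after (Q). Therefore $\int_M\bigl(\text{right-hand side of (Q)}\bigr)\,\dd\Omega=0$; since that integrand is $\leqslant 0$, it vanishes identically, so $H\equiv\sqrt{\det A}$, i.e.\ $M$ is totally umbilical. The Codazzi equation then forces the umbilical factor to be constant, so $M$ is an extrinsic sphere (equivalently, $K$ is constant and Theorem~\ref{thm:ch4_Kcst} applies), contradicting our assumption. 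Hence $\Phi$ must change sign. The converse implication is trivial: on an extrinsic sphere $\II=c\,\I$, whence $K_{\II}=K/c=K/\sqrt{\det A}$ and $\Phi\equiv 0$.

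I expect the main obstacle to be the claim that $\int_M\bigl(\text{left-hand side of (Q)}\bigr)\,\dd\Omega=0$. One has to rewrite the contraction of $\grad\bigl(H^{2}/\det A\bigr)$ with $\frac{\det A}{2}A^{\leftarrow}\grad H-\frac14\grad\det A$ as the divergence of an explicit vector field built from $A$, $\grad H$, $\grad\det A$ and functions of $H$ and $K$, and the bookkeeping that this entails --- most painlessly carried out in a principal-direction orthonormal frame on the umbilic-free locus, where the Codazzi relations take their simplest form --- is where the real effort lies. By contrast, the Gauss--Bonnet reduction of the first step and the algebraic simplification of the right-hand side of (Q) are routine.
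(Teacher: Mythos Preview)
Your first step---applying Gauss--Bonnet to both $(M,\I)$ and $(M,\II)$ to force $\Phi\equiv 0$---is correct and is the reduction the paper also makes; your algebraic rewriting of the right-hand side of (Q) under $K_{\II}=K/\sqrt{\det A}$ is correct as well. The gap is the assertion that $\int_M\bigl(\text{left-hand side of (Q)}\bigr)\,\dd\Omega=0$. You do not prove it, and the justifications you offer are not arguments: the divergence-freeness of $T_1$ is insufficient, since writing the left-hand side as $\I(\grad f,X)$ with $f=H^2/\det A$ and $X=\tfrac12 T_1\grad H-\tfrac14\grad\det A$, integration by parts yields $-\int_M f\,\mathrm{div}\,X\,\dd\Omega$, and $\mathrm{div}\,X=\tfrac12\,\mathrm{tr}(T_1\circ\mathrm{Hess}\,H)-\tfrac14\Delta(\det A)$ neither vanishes nor is $L^2$-orthogonal to $f$ in general. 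The ``frame computation in principal directions indicated just after (Q)'' in the paper establishes (Q) as a \emph{pointwise} identity; it says nothing about the integral of either side. You yourself flag this as ``the main obstacle'', and indeed the proof does not close here.

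The paper bypasses the issue entirely. Once $K_{\II}=K/\sqrt{\det A}$ holds everywhere, it evaluates (Q) at a single point $p_\bullet$ where $H^2/\det A$ attains its maximum; there $\grad(H^2/\det A)=0$, so the left-hand side of (Q) vanishes \emph{at that point}, giving $0=2H(H^2-\det A)\bigl(K_{\II}-HK/\det A\bigr)$ at $p_\bullet$. Since $M$ is assumed not to be an extrinsic sphere, $p_\bullet$ is non-umbilical (otherwise the maximum of $H^2/\det A$ would be $1$, forcing $M$ totally umbilical), hence $H>\sqrt{\det A}$ there, whence $K_{\II}=HK/\det A>K/\sqrt{\det A}$ at $p_\bullet$---contradicting $\Phi\equiv 0$. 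No global integration of (Q) is needed.
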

\begin{proof}
Assume for a compact surface $M\subseteq\mathrm{H}^3$ with strictly positive Gaussian curvature, which is not an extrinsic sphere, the function $K_{\II}-\frac{K}{\sqrt{\dA}}$ does not change sign. The latter condition can only be satisfied if this function vanishes identically. Let $p_{\bullet}$ be a point where the function $\frac{H^2}{\dA}$ assumes its maximal value. Formula \textrm{(Q)} reduces at this point $p_{\bullet}$ to
\[
0 \is  2 H \left( H^2-\dA\right)\left(K_{\II}-\frac{H\,K}{\dA} \right) \,.
\]
(The symbol $\is$ indicates that both sides of an equation should be evaluated at the point $p_{\bullet}$.) 
Since $M$ is not an extrinsic sphere, $p_{\bullet}$ is non-umbilical, and thus it can be inferred that
\[
K_{\II} \is \frac{H\, K}{\dA} \stackrel{\bullet}{>} \frac{K}{\sqrt{\dA}}\,,
\] 
which is clearly a contradiction.
\end{proof}

\begin{corollary}
\label{cor:ch4_sgn}
A compact surface $M\subseteq\mathrm{H}^3$ with strictly positive Gaussian curvature is a sphere as soon as any of the following conditions is satisfied:
\[
\left\{
\begin{array}{lcrcl}
\textrm{(i.)}   &\hspace{3mm}& \displaystyle K_{\II} &\displaystyle \geqslant&\displaystyle \frac{K}{\sqrt{K+1}}\,;\raisebox{-16pt}{\rule{0pt}{35pt}} \\
\textrm{(ii.)}  &\hspace{3mm}& \displaystyle K_{\II} &\displaystyle \leqslant&\displaystyle \frac{K}{\sqrt{K+1}}\,;\raisebox{-16pt}{\rule{0pt}{35pt}}\\
\textrm{(iii.)} &\hspace{3mm}& \displaystyle K_{\II} &\displaystyle \leqslant&\displaystyle \frac{K}{H}\,.\raisebox{-11pt}{\rule{0pt}{30pt}}
\end{array}
\right.
\]
\end{corollary}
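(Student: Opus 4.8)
The plan is to obtain each of the three conditions from Theorem~\ref{thm:ch4_koutroufiotis}, after one small preliminary reduction in the third case. Recall first that the contracted Gauss equation gives $\dA = K+1$, so the function occurring in that theorem is precisely $\displaystyle K_{\II}-\frac{K}{\sqrt{K+1}}$, and that for a surface with $K>0$ the unit normal is, as fixed in Section~2, the one making $\II$ positive-definite.

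For condition~(i) I would simply note that the hypothesis $K_{\II}\geqslant\frac{K}{\sqrt{K+1}}$ says that $K_{\II}-\frac{K}{\sqrt{\dA}}$ is everywhere non-negative, hence does not change sign; the dichotomy of Theorem~\ref{thm:ch4_koutroufiotis} then forces $M$ to be an extrinsic sphere. Condition~(ii) is handled identically, the function now being everywhere non-positive. For condition~(iii) the plan is to reduce to (ii): since $\II$ is positive-definite the principal curvatures $\kappa_1,\kappa_2$ are positive, so $H^2-\dA=\left(\frac{\kappa_1-\kappa_2}{2}\right)^2\geqslant 0$ and therefore $0<\sqrt{\dA}\leqslant H$. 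Because $K>0$, this yields $\frac{K}{H}\leqslant\frac{K}{\sqrt{\dA}}$, so the hypothesis $K_{\II}\leqslant\frac{K}{H}$ implies $K_{\II}\leqslant\frac{K}{\sqrt{\dA}}$ on all of $M$, which is exactly case~(ii); hence $M$ is again an extrinsic sphere.

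I do not anticipate a genuine obstacle: the corollary is a formal consequence of Theorem~\ref{thm:ch4_koutroufiotis} together with the elementary pointwise inequality $H\geqslant\sqrt{\dA}$. The only point that merits a word of care is the orientation convention in case~(iii) --- one must use the unit normal that renders $\II$ positive-definite, so that $H>0$ and $\frac{K}{H}$ is a well-defined positive function --- but this is precisely the convention already adopted in Section~2 for surfaces of strictly positive Gaussian curvature, so the argument goes through without incident. (Alternatively, one could treat (iii) directly by the method used to prove Theorem~\ref{thm:ch4_koutroufiotis}, i.e.\ by evaluating formula (Q) at a maximum of $H^2/\dA$, but the reduction above is shorter.)
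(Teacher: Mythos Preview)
Your proposal is correct and matches the paper's intended argument: the corollary is stated immediately after Theorem~\ref{thm:ch4_koutroufiotis} without a separate proof, so (i) and (ii) are direct applications of that dichotomy, and your reduction of (iii) to (ii) via the pointwise inequality $H\geqslant\sqrt{\dA}$ is exactly the step the paper leaves implicit.
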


The following theorem generalises theorem 29 of \cite{aledoromero2003}.
\begin{theorem}
\label{thm:ch4_HKKII}
Let $M\subseteq\mathrm{H}^3$ be a compact surface with strictly positive Gaussian curvature. Assume real numbers $C$, $r$ and $s$ can be found,
subject to the conditions $0\leqslant s \leqslant 1$, $r\leqslant 1$, and $2 r + s \leqslant 1$, such that the equation 
\[
K_{\II} = C\,H^s\,K^r
\]
is satisfied. Then $M$ is an extrinsic sphere. 
\end{theorem}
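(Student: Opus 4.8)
The plan is to follow the template of Theorem~\ref{thm:ch4_koutroufiotisH3} and of the preceding lemma: one combines the pointwise estimate $K_{\II}\geqslant\frac{H K}{K+1}$, valid at a maximum of $K$ (where $\grad K=0$, so that the gradient term in the formula for $K_{\II}$ drops out and $\PP\geqslant 0$ does the rest), with the inequality $H\geqslant\sqrt{\dA}=\sqrt{K+1}$ (which holds because the shape operator is positive-definite, so that the arithmetic--geometric mean inequality applies to its eigenvalues), in order to bring the hypothesis into the form $K_{\II}\sqrt{\dA}\geqslant K$ on all of $M$; a twofold application of the Gauss--Bonnet theorem together with Theorem~\ref{thm:ch4_Kcst} then finishes the proof.

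First I would note that $C>0$, which follows from the Gauss--Bonnet theorem exactly as in the proof of Theorem~\ref{thm:ch4_koutroufiotisH3}. Next, choose a point $p_{+}$ where $K$ attains its maximum. Since $\grad K=0$ there, one has $\left.K_{\II}\right|_{(p_+)}\geqslant\left.\frac{H K}{K+1}\right|_{(p_+)}$; substituting $K_{\II}=C\,H^{s}K^{r}$, dividing by the positive number $H K$, and using $H\geqslant\sqrt{K+1}$ together with $s-1\leqslant 0$ (so that $H^{s-1}\leqslant(K+1)^{(s-1)/2}$), one is led to
\[
\left.C\,K^{r-1}(K+1)^{(s+1)/2}\,\right|_{(p_+)}\;\geqslant\;1\,.
\]

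The crucial point is that the real function $g(x)=x^{r-1}(x+1)^{(s+1)/2}$ is strictly decreasing on $\left]\,0\,,\,+\infty\,\right[$. Indeed $(\log g)'(x)=\frac{1}{x(x+1)}\bigl(\frac{2r+s-1}{2}\,x+(r-1)\bigr)$, and for $x>0$ the parenthesis is negative, because $2r+s-1\leqslant 0$ and $r-1<0$ (the latter since $s\geqslant 0$ and $2r+s\leqslant 1$ force $r\leqslant\frac12$). As $p_{+}$ maximises $K$, this monotonicity upgrades the displayed inequality to $C\,g(K)\geqslant 1$ on all of $M$. Invoking $H\geqslant\sqrt{K+1}$ and $s\geqslant 0$ once more,
\[
K_{\II}\,\sqrt{\dA}\;=\;C\,H^{s}K^{r}\sqrt{K+1}\;\geqslant\;C\,K^{r}(K+1)^{(s+1)/2}\;=\;K\cdot C\,g(K)\;\geqslant\;K\,,
\]
so that $\int K_{\II}\,\dd\Omega_{\II}=\int K_{\II}\sqrt{\dA}\,\dd\Omega\geqslant\int K\,\dd\Omega$. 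By the Gauss--Bonnet theorem both extreme members equal $2\pi\chi(M)$, hence equality holds everywhere; retracing the last chain of inequalities forces $C\,g(K)\equiv 1$, and the strict monotonicity of $g$ then makes $K$ constant, so that Theorem~\ref{thm:ch4_Kcst} shows $M$ to be an extrinsic sphere.

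I expect the only real obstacle to be the monotonicity of $g$, that is, checking that the three numerical hypotheses on $r$ and $s$ are exactly what is needed to keep the parenthesis in $(\log g)'$ non-positive for every $x>0$; the remainder is the familiar bookkeeping with $\PP\geqslant 0$, the arithmetic--geometric mean inequality, and Gauss--Bonnet that already underlies the preceding results.
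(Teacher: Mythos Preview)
Your argument is correct and follows the same skeleton as the paper: estimate $K_{\II}\geqslant\frac{HK}{K+1}$ at a maximum of $K$, use $H\geqslant\sqrt{K+1}$ with $s\leqslant 1$ to obtain $C\,g(K)\geqslant 1$ at that point, propagate this to all of $M$ via the monotonicity of $g(x)=x^{r-1}(x+1)^{(s+1)/2}$ (equivalently, the paper's $\varphi=1/g$), and deduce $K_{\II}\sqrt{\dA}\geqslant K$ using $s\geqslant 0$. Your observation that $s\geqslant 0$ and $2r+s\leqslant 1$ already force $r\leqslant\tfrac12$, making the hypothesis $r\leqslant 1$ redundant and the monotonicity of $g$ strict, is correct.

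The one genuine difference is in the endgame. The paper, having reached $K_{\II}\geqslant K/\sqrt{K+1}$, simply quotes Corollary~\ref{cor:ch4_sgn}, whose proof rests on Theorem~\ref{thm:ch4_koutroufiotis} and hence on Erard's formula~(Q). You instead run the Gauss--Bonnet comparison directly, force equality in the chain to get $C\,g(K)\equiv 1$, and use the \emph{strict} monotonicity of $g$ to conclude that $K$ is constant, finishing with Theorem~\ref{thm:ch4_Kcst}. Your route is slightly more self-contained: it bypasses formula~(Q) entirely and needs only the integral identity and Liebmann-type rigidity.
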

\begin{proof}
Let $\varphi$ stand for the function
\[
\varphi : \left]\,0\,,\,+\infty\,\right[\rightarrow\mathbb{R} : 
x \mapsto \frac{x^{1-r} }{(1+x)^{\frac{s+1}{2}}}\,.
\]
It should be remarked that $\varphi'(x)\geqslant 0$ for all $x\in \left]\,0\,,\,+\infty\,\right[$. Furthermore, the constant $C$ has to be strictly positive.
Let $K$ assume its maximum in a point $p_+$. Then it can be concluded
\begin{eqnarray*}
\left(C\, H^s\, K^r\right)_{(p_+)} 
&=& \left.K_{\II}\right\vert_{p_+} \\
&=& \left\lgroup \frac{H\ K}{K+1}+\PP-\frac{\I\left(\grad K\,,\,A^{\leftarrow}\grad K\right)}{8(K+1)^2} \right\rgroup_{(p_+)}\\
&\geqslant& 
\left\lgroup \frac{H K}{K+1} \right\rgroup_{(p_+)}\,.
\end{eqnarray*}
Since $s-1\leqslant 0$, it follows that $\displaystyle \left(K+1\right)^{\frac{s-1}{2}} \geqslant H^{s-1}$. Consequently,
\[
\left\lgroup C\, (K+1)^{\frac{s-1}{2}}\, K^{r-1}\right\rgroup_{(p_+)}
\geqslant 
\left\lgroup C\, H^{s-1}\, K^{r-1} \right\rgroup_{(p_+)}
\geqslant
\left\lgroup\frac{1}{K+1}\right\rgroup_{(p_+)}
\]
and hence for all $q\in M$
\[
C \geqslant 
\left\lgroup \frac{K^{1-r} }{(K+1)^{\frac{s+1}{2}}} \right\rgroup_{(p_+)}
=
\left.\varphi(K)\right\vert_{(p_+)}\geqslant\left.\varphi(K)\right\vert_{(q)}
=
\left\lgroup \frac{K^{1-r} }{(K+1)^{\frac{s+1}{2}}} \right\rgroup_{(q)}
\,.
\]
Thus there holds
\[
K_{\II}\sqrt{K+1}
= C\,H^s\,K^r\,\sqrt{K+1}
\geqslant
\frac{H^s\, K}{(K+1)^{\frac{s}{2}} }\geqslant K\,.
\]
The theorem follows by virtue of corollary~\ref{cor:ch4_sgn}.
\end{proof}

The lemma below is a preparation for theorem~\ref{thm:ch4_HKKIIK+1}, which generalises a theorem of \cite{hasanis1982}. 
\begin{lemma}
\label{lem:ch4_Has}
Assume a compact surface $M\subseteq\mathrm{H}^3$ has positive-definite second fundamental form. If $p_{\bullet}\in M$ is a critical point of $H$, then 
$\displaystyle
\left.K_{\II}\right\vert_{(p_{\bullet})} \geqslant \left\lgroup\frac{H\,K}{\dA}\right\rgroup_{(p_{\bullet})}
$.
\end{lemma}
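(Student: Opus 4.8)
The plan is to evaluate Erard's formula (Q) at the critical point $p_{\bullet}$ and then to distinguish the cases in which $p_{\bullet}$ is umbilical and non-umbilical. Since the second fundamental form is positive-definite, both principal curvatures are positive, so that $H>0$ and $\dA>0$ on all of $M$; moreover the arithmetic--geometric mean inequality yields $H^{2}\geqslant\dA$ everywhere, with equality precisely at the umbilical points.

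First I would simplify the left-hand side of (Q) at $p_{\bullet}$. Because $\grad H$ vanishes there, $A^{\leftarrow}\grad H=0$, so the first term drops out, and $\grad\bigl(\tfrac{H^{2}}{\dA}\bigr)=-\tfrac{H^{2}}{(\dA)^{2}}\grad\dA$ at $p_{\bullet}$. Hence the entire left-hand side collapses to $\tfrac{H^{2}}{4(\dA)^{2}}\,\I(\grad\dA,\grad\dA)$, which is non-negative. Thus (Q) specialises at $p_{\bullet}$ to
\[
\frac{H^{2}}{4(\dA)^{2}}\,\I\bigl(\grad\dA,\grad\dA\bigr)
=2H\left(K_{\II}-\frac{H\,K}{\dA}\right)\bigl(H^{2}-\dA\bigr)\,.
\]

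If $p_{\bullet}$ is non-umbilical, then $H^{2}-\dA>0$; combined with $H>0$ and the non-negativity of the left-hand side, this forces $K_{\II}\geqslant\tfrac{H\,K}{\dA}$ at $p_{\bullet}$, which is the assertion. If $p_{\bullet}$ is umbilical, then $H^{2}=\dA$ there, so the displayed identity forces $\I(\grad\dA,\grad\dA)=0$, whence $\grad\dA=0$ and therefore $\grad K=0$ at $p_{\bullet}$ (recall $\dA=K+1$). Substituting $\grad K|_{(p_{\bullet})}=0$ into the formula for $K_{\II}$ quoted above leaves $K_{\II}|_{(p_{\bullet})}=\bigl(\tfrac{H\,K}{\dA}\bigr)_{(p_{\bullet})}+\PP|_{(p_{\bullet})}$, and the inequality follows since $\PP\geqslant 0$.

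The only subtle point is the umbilical case, and it works precisely because umbilical points are global minima of the function $H^{2}/\dA$: alternatively one may obtain $\grad\dA|_{(p_{\bullet})}=0$ there directly from $\grad(H^{2}/\dA)|_{(p_{\bullet})}=0$ together with $\grad H|_{(p_{\bullet})}=0$, bypassing (Q) altogether. Beyond this, no genuine estimate is required; once the distinguished point has been chosen, the conclusion amounts to reading off the sign of each factor occurring in (Q).
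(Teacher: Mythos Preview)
Your proof is correct and follows essentially the same route as the paper: evaluate formula~(Q) at $p_{\bullet}$ using $\grad H|_{p_{\bullet}}=0$ to obtain $\tfrac{H^{2}}{4(\dA)^{2}}\I(\grad\dA,\grad\dA)=2H(H^{2}-\dA)\bigl(K_{\II}-\tfrac{HK}{\dA}\bigr)$, and then handle the case $H^{2}=\dA$ via the explicit formula for $K_{\II}$. The only cosmetic difference is that the paper phrases the argument as a proof by contradiction, whereas you proceed by a direct case distinction (umbilical versus non-umbilical); the substance is identical.
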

\begin{proof}
Let the point $p_{\bullet}$ be such as in the lemma and suppose that the inequality
$\displaystyle
K_{\II} \stackrel{\bullet}{<} \frac{H\,K}{\dA}$ holds.
Since the gradient of $ H $ vanishes at the point $p_{\bullet}$, formula (Q) implies the following at the point $p_{\bullet}$:
\[
0 \stackrel{\bullet}{\leqslant}
\frac{H^2}{4(\det A)^2} \ 
\I\left(\grad\dA\,,\,\grad\dA\right)
\is
2\,H\,(H^2-\dA)\,\left(K_{\II}-\frac{H\,K}{\dA}\right)\,.
\]
Furthermore,  $2\,H\,(H^2-\dA)$ is non-negative. The above inequality can only be valid if $H^2\is \dA$, and consequently $\left.\grad\dA\right\vert_{(p_{\bullet})}$ vanishes. Thus, the following contradiction follows:
\[
K_{\II}
\is
\frac{H\,K}{\dA } +\PP
\stackrel{\bullet}{\geqslant}
\frac{H\,K}{\dA }
\,.
\]
\end{proof}

\begin{theorem}
\label{thm:ch4_HKKIIK+1}
Let $M\subseteq\mathrm{H}^3$ be a compact surface with strictly positive Gaussian curvature. Assume real numbers $C$, $r$ and $s$ can be found,
subject to the condition $-1\leqslant r \leqslant \frac{-1}{2}$, such that the equation
\[
K_{\II} = C\,H^s\,(K+1)^r\,K
\]
is satisfied. Then $M$ is an extrinsic sphere.
\end{theorem}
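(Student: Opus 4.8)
The plan is to reduce the statement to part~(i) of Corollary~\ref{cor:ch4_sgn}: I shall show that $K_{\II}\geqslant\dfrac{K}{\sqrt{K+1}}$ holds at every point of $M$. To begin with, $C>0$, since $\int K_{\II}\,\dd\Omega_{\II}=\int K\,\dd\Omega>0$ by the Gauss--Bonnet theorem, so $K_{\II}$ is positive somewhere. Substituting the hypothesis, the inequality to be proved is equivalent to $C\,H^{s}(K+1)^{r+\frac12}\geqslant1$. Two pointwise facts carry the argument. First, at a critical point $p$ of $K$ the gradient term in the quoted formula for $K_{\II}$ drops out and $\PP\geqslant0$, so $\left.K_{\II}\right\vert_{(p)}\geqslant\left.\tfrac{HK}{K+1}\right\vert_{(p)}$; by Lemma~\ref{lem:ch4_Has}, this same inequality $K_{\II}\geqslant\tfrac{HK}{\dA}$ holds at every critical point of $H$. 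Second, $H\geqslant\sqrt{K+1}$ by the contracted Gauss equation. Writing $\mu:=r+\tfrac{s+1}{2}$, either form of the first fact, combined with the second, gives at the point in question $C\,H^{s-1}(K+1)^{r+1}\geqslant1$, and --- provided $s\leqslant1$ --- also $C(K+1)^{\mu}\geqslant1$.

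Suppose first that $0\leqslant s\leqslant1$. Apply the first fact at a point where $K$ is maximal if $\mu\leqslant0$, and at a point where $K$ is minimal if $\mu>0$; in either case that point realises the maximum over $M$ of the monotone function $x\mapsto(1+x)^{-\mu}$, so $C(K+1)^{\mu}\geqslant1$ holds on all of $M$. Since $s\geqslant0$ and $H\geqslant\sqrt{K+1}$, one then has $C\,H^{s}(K+1)^{r+\frac12}=C(K+1)^{\mu}\bigl(H/\sqrt{K+1}\bigr)^{s}\geqslant1$, i.e. $K_{\II}\geqslant\tfrac{K}{\sqrt{K+1}}$, and Corollary~\ref{cor:ch4_sgn} applies. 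If instead $s<0$, the same reasoning (valid as $s<1$) still delivers $C(K+1)^{\mu}\geqslant1$ everywhere, with $\mu<0$ because $r\leqslant-\tfrac12$; now the factor $\bigl(H/\sqrt{K+1}\bigr)^{s}\leqslant1$ runs the wrong way, so I would in addition invoke Lemma~\ref{lem:ch4_Has} at a point where $H$ is largest, getting $H_{\max}^{1-s}\leqslant C(K+1)^{r+1}\leqslant C\,\bigl(\max_{M}\dA\bigr)^{r+1}$ --- a genuine upper bound on $H_{\max}$, since $1-s>0$. Feeding this, together with the already-established $C\,\bigl(\max_{M}\dA\bigr)^{\mu}\geqslant1$ and $r\leqslant-\tfrac12$, into a short elementary estimate yields $H\leqslant\bigl(C(K+1)^{r+\frac12}\bigr)^{1/(-s)}$ at every point, which is once more $K_{\II}\geqslant\tfrac{K}{\sqrt{K+1}}$; Corollary~\ref{cor:ch4_sgn} finishes this case.

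The remaining case $s>1$ is the crux: there $H^{1-s}\leqslant(K+1)^{(1-s)/2}$, the opposite of what the monotonicity scheme needs, so Erard's formula~(Q) must be used directly. Put $\phi:=C\,H^{s-1}(K+1)^{r+1}$, so that $K_{\II}=\phi\cdot\tfrac{HK}{\dA}$; it suffices to prove $\phi\geqslant1$ on $M$, for then $K_{\II}\geqslant\tfrac{HK}{\dA}\geqslant\tfrac{K}{\sqrt{K+1}}$. If this failed, $\phi$ would attain a value below $1$ at an interior minimum $q$. There $\phi(q)<1$ forces $\left.\grad K\right\vert_{(q)}\neq0$ (by the first fact), while $\left.\grad\phi\right\vert_{(q)}=0$ gives $\left.\grad H\right\vert_{(q)}=\lambda\left.\grad K\right\vert_{(q)}$ with $\lambda=-\tfrac{(r+1)H}{(s-1)(K+1)}\leqslant0$ --- here $r\geqslant-1$ is used --- and if $\lambda=0$, i.e. $r=-1$, then $q$ is a critical point of $H$ and Lemma~\ref{lem:ch4_Has} already yields $\phi(q)\geqslant1$; so assume $\lambda<0$. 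Substituting these proportionalities into~(Q) at $q$ --- whereupon $\grad(H^{2}/\dA)$ also becomes a scalar multiple of $\grad K$, with sign determined by $2r+s+1>0$ (which holds since $s>1\geqslant-2r-1$) --- the left-hand side of~(Q) becomes a \emph{positive} number, being a negative multiple of the negative quantity $\tfrac{\dA\,\lambda}{2}\,\I(\grad K,A^{\leftarrow}\grad K)-\tfrac14\,\I(\grad K,\grad K)$, whereas its right-hand side $2H\bigl(K_{\II}-\tfrac{HK}{\dA}\bigr)(H^{2}-\dA)$ is $\leqslant0$ because $\phi(q)<1$. This contradiction gives $\phi\geqslant1$, and Corollary~\ref{cor:ch4_sgn} completes the proof. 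I expect the sign-chasing in~(Q) for $s>1$ to be the main obstacle to a clean write-up; note that $-1\leqslant r$ is indispensable precisely there, while $r\leqslant-\tfrac12$ is used in the case $s<0$ (to ensure $\mu<0$).
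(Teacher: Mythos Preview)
Your argument is essentially correct, but it is considerably more involved than the paper's proof, and the ``short elementary estimate'' in the case $s<0$ is not written out (it can be completed: from $H_{\max}^{1-s}\leqslant C(\max_M\dA)^{r+1}$ and $H_{\max}\geqslant\sqrt{\max_M\dA}$ one gets $H^{-s}\leqslant H_{\max}^{-s}\leqslant C(\max_M\dA)^{r+\frac12}\leqslant C(K+1)^{r+\frac12}$, the last step using $r\leqslant-\tfrac12$; the bound $C(\max_M\dA)^{\mu}\geqslant1$ is in fact not needed here). The paper avoids your three-way split on $s$ entirely. It works only with critical points of $H$ and with the single exponent $s+2r+1$: Lemma~\ref{lem:ch4_Has} at an extremum of $H$ gives $1\leqslant C\,H^{s-1}(K+1)^{r+1}$ there, and since $r\geqslant-1$ one has $(K+1)^{r+1}\leqslant H^{2r+2}$, whence $1\leqslant C\,H^{s+2r+1}$ at that point; choosing the minimum or maximum of $H$ according to the sign of $s+2r+1$ propagates this to all of $M$. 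Then $r\leqslant-\tfrac12$ gives $(K+1)^{-r-\frac12}\leqslant H^{-2r-1}\leqslant C\,H^{s}$, i.e.\ $K_{\II}\sqrt{K+1}\geqslant K$, and Corollary~\ref{cor:ch4_sgn} finishes. So where you bound $C(K+1)^{\mu}$ via extrema of $K$ (which forces the awkward case $s>1$ and the direct appeal to~(Q)), the paper bounds $C\,H^{s+2r+1}$ via extrema of $H$; because $H\geqslant\sqrt{\dA}$ can then be applied twice with exponents of the correct sign (once via $r\geqslant-1$, once via $r\leqslant-\tfrac12$), no restriction on $s$ ever arises and formula~(Q) is not needed at all. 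Your detour through~(Q) for $s>1$ is a valid and rather nice argument in its own right, but it is superfluous for this theorem.
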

\begin{proof}
It will first be shown that 
\begin{equation}
\label{eq:ch4_H}
1\leqslant C \,H^{s+2r+1}\,.
\end{equation}

\begin{itemize}
\item[1.] \textit{First Case: $s+2r+1\geqslant 0$.}
Let $p_-$ be a point where $H$ achieves its minimum, and choose an arbitrary point $q\in M$. 
It is known that
\[
H_{(p_-)}\leqslant \left\lgroup\frac{\dA }{K}K_{\II}  \right\rgroup_{(p_-)}
=
\left\lgroup
C\,H^s\,(K+1)^{1+r}
\right\rgroup_{(p_-)}
\]
and hence
\begin{eqnarray*}
1&\leqslant& C \left\lgroup H^{s-1} (K+1)^{1+r} \right\rgroup_{(p_-)}\\
&\leqslant&C \left\lgroup H^{s-1+2+2r}\right\rgroup_{(p_-)} \leqslant C \left.H^{s+2r+1}\right\vert_{(q)}\,.
\end{eqnarray*}
\item[2.] \textit{Second Case: $s+2r+1\leqslant 0$.} 
This follows similarly by investigating a point where $H$ assumes its maximum.
\end{itemize}
It follows from (\ref{eq:ch4_H}) that
\[
(K+1)^{\frac{-1-2r}{2}} \leqslant H^{-1-2r} =\frac{1}{H^{1+2r}} \leqslant C H^s\,,
\]
and hence also
\[
K \leqslant C\, H^s \, (K+1)^{r}\, K \, \sqrt{K+1} = K_{\II}\, \sqrt{K+1}\,.
\]
The result now follows from corollary \ref{cor:ch4_sgn}.
\end{proof}

\begin{lemma}
\label{lem:ch4_KII_HdetA}
Let $M\subseteq\mathrm{H}^3$ be a surface with positive-definite second fundamental form. If $p_{\bullet}\in M$ is a critical point of $\frac{H}{\dA}$, there holds 
\[
\left.K_{\II}\right\vert_{(p_{\bullet})}\geqslant 
\left\lgroup \frac{H\, K}{\dA}\right\rgroup_{(p_{\bullet})}.
\]
\end{lemma}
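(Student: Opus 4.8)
The plan is to follow the template of Lemma~\ref{lem:ch4_Has} almost verbatim, with the critical point of $H$ replaced by a critical point of $\frac{H}{\dA}$, and again to argue by contradiction: I would assume that $K_{\II}\stackrel{\bullet}{<}\frac{H\,K}{\dA}$ at the given point $p_{\bullet}$ and extract a contradiction from formula~(Q).

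The first step is bookkeeping. From $\left.\grad\!\left(\frac{H}{\dA}\right)\right\vert_{(p_{\bullet})}=0$ it follows that $\grad H$, $\grad\dA$ and $\grad\!\left(\frac{H^{2}}{\dA}\right)$ are all proportional at $p_{\bullet}$, with $\grad H\is\frac{H}{\dA}\,\grad\dA$ and $\grad\!\left(\frac{H^{2}}{\dA}\right)\is\frac{H}{\dA}\,\grad H$. Substituting these relations into the left-hand side of formula~(Q) and simplifying collapses~(Q) at $p_{\bullet}$ to
\[
\frac{H}{2}\,\I\!\left(\grad H\,,\,A^{\leftarrow}\grad H\right)-\frac{1}{4}\,\I\!\left(\grad H\,,\,\grad H\right)\is 2\,H\,(H^{2}-\dA)\left(K_{\II}-\frac{H\,K}{\dA}\right).
\]
The point of the reduction is that the left-hand side no longer contains $\grad\dA$, and the $\grad H$-bilinear terms come with clean coefficients. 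Under the contradiction hypothesis, and using $H>0$ together with $H^{2}-\dA\geqslant 0$, the right-hand side is $\leqslant 0$, and strictly negative unless $p_{\bullet}$ is umbilical.

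The crux is to show that the left-hand side of this identity is $\geqslant 0$, and this is exactly where positive-definiteness of $\II$ is used. At $p_{\bullet}$ the principal curvatures $k_{1},k_{2}$ are strictly positive, so $\max\{k_{1},k_{2}\}<k_{1}+k_{2}=2H$; equivalently, $2H\,A^{\leftarrow}-\id$ is a positive-definite operator there (its eigenvalues are $k_{2}/k_{1}$ and $k_{1}/k_{2}$). Pairing this operator with $\grad H$ gives $2H\,\I(\grad H,A^{\leftarrow}\grad H)\geqslant\I(\grad H,\grad H)$, which is precisely the non-negativity of the left-hand side above. This contradicts the strict negativity of the right-hand side and settles the non-umbilical case.

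It then remains to handle the umbilical case $H^{2}\is\dA$, along the same lines as in Lemma~\ref{lem:ch4_Has}: there $A^{\leftarrow}\is\frac{1}{H}\,\id$, so the collapsed form of~(Q) reads $\frac{1}{4}\,\I(\grad H,\grad H)\is 0$; hence $\grad H$, and therefore $\grad\dA=\grad K$, vanish at $p_{\bullet}$, whereupon the formula for $K_{\II}$ gives $K_{\II}\is\frac{H\,K}{\dA}+\PP\stackrel{\bullet}{\geqslant}\frac{H\,K}{\dA}$, again contrary to the assumption. The step I expect to be the main obstacle is the second one — carrying out the algebraic collapse of~(Q) at a critical point of $\frac{H}{\dA}$ and, above all, recognising that the spurious term $-\frac{1}{4}\,\I(\grad H,\grad H)$ is absorbed by $\frac{H}{2}\,\I(\grad H,A^{\leftarrow}\grad H)$ exactly because the principal curvatures are positive; once that estimate is isolated, the rest is routine.
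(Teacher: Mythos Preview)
Your proof is correct and is essentially the paper's own argument: the paper also collapses formula~(Q) at $p_{\bullet}$ using $\grad\!\left(\frac{H^{2}}{\dA}\right)\is\frac{H}{\dA}\,\grad H$, obtaining the left-hand side in the form $\frac{1}{4}\,\I\!\left(\grad H,\,\{2H\,\id-A\}\circ A^{\leftarrow}\,\grad H\right)$, which is exactly your $\frac{H}{2}\,\I(\grad H,A^{\leftarrow}\grad H)-\frac{1}{4}\,\I(\grad H,\grad H)$ rewritten, and then argues positivity via the same operator $2H\,A^{\leftarrow}-\id$ (with eigenvalues $k_{2}/k_{1}$, $k_{1}/k_{2}$) before handling the umbilical case $H^{2}\is\dA$ identically. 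The only difference is cosmetic: the paper factors the operator as a composition of two positive-definite pieces, whereas you compute the eigenvalues directly.
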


\begin{proof}
Since $p_{\bullet}$ is a critical point of $\frac{H}{\dA}$, the formula
\[
\grad \left(\frac{H^2}{\dA}\right) 
\is
\frac{H}{\dA} \grad H
\]
and similar ones are valid.
In this way, formula (Q) can be rewritten as follows at the point $p_{\bullet}$:
\[
\frac{1}{4}\, \I \left(\rule{0pt}{10pt}
\grad H\,,
\left\{ 2 H \id- A  \right\}\circ \left\{A^{\leftarrow}\right\}
\grad H
\right)
\is
2 H \left( H^2-\dA\right)\left( K_{\II}-\frac{H\,K}{\dA} \right)\,.
\]
Since both operators between curly brackets in the above formula are 
positive-definite, it follows that
\[
0 \stackrel{\bullet}{\leqslant}
2 H \left( H^2-\dA\right)\left( K_{\II}-\frac{H\,K}{\dA} \right)
\,.
\]
Assume first that the inequality $K_{\II} \stackrel{\bullet}{\geqslant} 
\frac{H\,K}{\dA}$ is not satisfied. In regard of the above inequality, this can only be the case if $H^2\is \dA$. But now the rewritten formula (Q) reveals that $\grad H\is 0$ and hence also $\grad \dA\is0$. 
Consequently,
\[
K_{\II}
\is
\frac{H\,K}{\det A}
+\PP
\stackrel{\bullet}{\geqslant}
\frac{H\,K}{\dA}
\,,
\]
which is in contradiction with our assumption. This finishes the proof.
\end{proof}

The next two theorems generalise results of \cite{baik1987a}.
\begin{theorem}
\label{thm:ch4_baik}
Let $M\subseteq\mathrm{H}^3$ be a compact surface with strictly positive Gaussian curvature. If the equality
\[
K_{\II} = C\, H^s\, (K+1)^r\, K
\]
is satisfied for real numbers $C$, $s$ and $r$, subject to the condition $\frac{-1}{2}\leqslant r+s \leqslant 0$, then is $M$ an extrinsic sphere.
\end{theorem}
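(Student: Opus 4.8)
The plan is to reduce to corollary~\ref{cor:ch4_sgn} by establishing the pointwise inequality $K_{\II}\sqrt{\dA}\geqslant K$ on all of $M$; since $K_{\II}\sqrt{\dA}=C\,H^{s}(K+1)^{r+\frac12}\,K$, this amounts to showing $C\,H^{s}(K+1)^{r+\frac12}\geqslant 1$ everywhere. First I would record that $C>0$: throughout $M$ one has $K>0$, $H>0$ and $\dA=K+1>1$, so $C$ has the same sign as $K_{\II}$, and $\int K_{\II}\,\dd\Omega_{\II}=2\pi\chi(M)>0$ by the Gauss--Bonnet theorem. It thus remains to propagate a lower bound on $C$ from a well-chosen point to all of $M$.

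The principal tool is lemma~\ref{lem:ch4_KII_HdetA}. Let $p_{\bullet}$ be an extremal point of $H/\dA$. There the lemma gives $K_{\II}\geqslant\frac{H\,K}{\dA}$, and substituting the hypothesis this becomes $1\leqslant C\,H^{s-1}(K+1)^{r+1}$ at $p_{\bullet}$. Rewriting the right-hand side as $C\,\bigl(H/\dA\bigr)^{s-1}(K+1)^{r+s}$ and using $(K+1)^{r+s}\leqslant 1$ (because $K+1>1$ and $r+s\leqslant 0$) yields $1\leqslant C\,\bigl(H/\dA\bigr)^{s-1}$ at $p_{\bullet}$; choosing $p_{\bullet}$ to be a maximum of $H/\dA$ when $s\leqslant 1$ and a minimum when $s\geqslant 1$, this inequality then holds on all of $M$. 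One feeds this into the equation $K_{\II}=C\,H^{s}(K+1)^{r}K$, using $H\geqslant\sqrt{\dA}=\sqrt{K+1}$ (valid for any positive-definite shape operator) and the remaining hypothesis $r+s\geqslant-\frac12$, to reach $C\,H^{s}(K+1)^{r+\frac12}\geqslant 1$ everywhere, i.e.\ $K_{\II}\sqrt{\dA}\geqslant K$; corollary~\ref{cor:ch4_sgn} then identifies $M$ as an extrinsic sphere. The converse is immediate, since $H$, $K$ and $K_{\II}$ are constant on an extrinsic sphere.

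I expect the bookkeeping of signs to be the real difficulty. In the admissible range $-\frac12\leqslant r+s\leqslant 0$ the exponents $s-1$, $r+1$, $r+s$ and $2r+s+1$ can each have either sign, and each case dictates whether a maximum or a minimum must be used and in which direction $H\geqslant\sqrt{\dA}$ may be applied; the argument will therefore split into cases, in the spirit of the case distinction in the proof of theorem~\ref{thm:ch4_HKKIIK+1}. Where lemma~\ref{lem:ch4_KII_HdetA} alone is not enough, I would work instead at a critical point of $H$, where lemma~\ref{lem:ch4_Has} again furnishes $K_{\II}\geqslant\frac{H\,K}{\dA}$, or at an extremum of $K$, where $\grad K=0$ makes the last term in the formula for $K_{\II}$ vanish so that $K_{\II}\geqslant\frac{H\,K}{\dA}$ survives, now to be propagated along level sets of $K$ rather than of $H/\dA$.
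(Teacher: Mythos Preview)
Your overall plan---use lemma~\ref{lem:ch4_KII_HdetA} at an extremum of $H/\dA$, propagate a lower bound on $C$ times a power of $H/\dA$ to all of $M$, and conclude via corollary~\ref{cor:ch4_sgn}---matches the paper's. The gap is in step~3, where you pass from
\[
1\leqslant C\,H^{s-1}(K+1)^{r+1}=C\left(\frac{H}{\dA}\right)^{s-1}(K+1)^{r+s}
\]
to $1\leqslant C\,(H/\dA)^{s-1}$ by discarding $(K+1)^{r+s}\leqslant 1$. This throws away precisely the information you need. Indeed, once you have $1\leqslant C(H/\dA)^{s-1}$ everywhere, the target $CH^{s}(K+1)^{r+\frac12}\geqslant 1$ is equivalent to $H\geqslant (K+1)^{\frac12-r-s}$; but $\frac12-r-s\in[\frac12,1]$, so this exponent exceeds $\frac12$, and $H\geqslant\sqrt{K+1}$ is not enough. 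A concrete failure: take $r=-\tfrac12$, $s=0$. Then the target reduces to $C\geqslant 1$, while your intermediate inequality only gives $C\geqslant H/\dA$ at the maximum of $H/\dA$, and $H/\dA\leqslant 1/\sqrt{\dA}<1$.

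The fix is not to discard $(K+1)^{-(r+s)}$ but to absorb it into $H$ \emph{before} propagating, using $K+1\leqslant H^{2}$ and $-(r+s)\geqslant 0$: at $p_{\bullet}$,
\[
1\leqslant C\,H^{s-1}(K+1)^{r+1}
= C\,H^{s-1}(K+1)^{1+s+2r}(K+1)^{-(s+r)}
\leqslant C\,H^{-1-s-2r}(K+1)^{1+s+2r}
= C\left(\frac{H}{\dA}\right)^{-1-s-2r}.
\]
Now the exponent on $H/\dA$ is $-1-s-2r$ rather than $s-1$; split cases on its sign to propagate to all of $M$. From $1\leqslant C(H/\dA)^{-1-s-2r}$ everywhere one gets
\[
K\leqslant C\,H^{-1-s-2r}(K+1)^{1+s+2r}K
= K_{\II}\left(\frac{H^{2}}{\dA}\right)^{-\frac12-s-r}\sqrt{K+1}
\leqslant K_{\II}\sqrt{K+1},
\]
the last step using $-\tfrac12-s-r\leqslant 0$ and $H^{2}/\dA\geqslant 1$. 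This is where the lower bound $r+s\geqslant-\tfrac12$ is actually consumed; your sketch invoked it too early, at a point where it did no work.
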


\begin{proof}
It is first remarked that, for a critical point $p_{\bullet}$ of $\frac{H}{\dA}$,
there holds $1\stackrel{\bullet}{\leqslant} C \left(\frac{H}{\dA}\right)^{-1-s-2r}$. Namely, an application of the previous lemma gives that
\[
C\, H^s\,(K+1)^r\,K \is K_{\II} \stackrel{\bullet}{\geqslant} \frac{H\, K}{K+1}
\]
and hence
\begin{eqnarray*}
1 &\stackrel{\bullet}{\leqslant}& C\, H^{s-1}\,(K+1)^{r+1}\\
&\is& C\,H^{s-1}\,(K+1)^{r+1+(s+r)}\,(K+1)^{-(s+r)}\\
&\stackrel{\bullet}{\leqslant}& C\, H^{s-1-2(s+r)}\,(K+1)^{r+1+(s+r)}\\
&\is&C\, H^{-1-s-2r} \, (K+1)^{1+s+2r}\\
&\is& C \left(\frac{H}{K+1}\right)^{-1-s-2r}\,.
\end{eqnarray*}
It is now an easy consequence that the equality
\begin{equation}
\label{eq:ch4_HKplus1}
1 \leqslant C \left(\frac{H}{\dA}\right)^{-1-s-2r}
\end{equation}
holds on the entire surface $M$.

\begin{itemize}
\item[1.] \textit{First Case: $1+s+2r\leqslant 0$.}
Let now $p_-$ be a point where $\frac{H}{\dA}$ assumes its minimum.  There holds, for every point $q\in M$,
\[
1 \leqslant C\left\lgroup \frac{H}{K+1}\right\rgroup^{-1-s-2r}_{(p_-)} \leqslant 
C \left\lgroup\frac{H}{K+1}\right\rgroup^{-1-s-2r}_{(q)}\,.
\]
\item[2.] \textit{Second Case: $1+s+2r\geqslant 0$.}
This follows similarly by investigating a point where $\frac{H}{\dA}$ assumes its maximum.
\end{itemize}
It now follows from (\ref{eq:ch4_HKplus1}) that
\begin{eqnarray*}
K &\leqslant&
C\,H^{-1-s-2r}\,(K+1)^{1+s+2r}\,K\\
&=&
C\,H^s\,(K+1)^r\,K\,H^{-1-2s-2r}\,(K+1)^{\frac{1}{2}+s+r}\,\sqrt{K+1}\\
&=&
K_{\II}\left(\frac{H^2}{\dA} \right)^{-\frac{1}{2}-s-r}\,\sqrt{K+1}\\
&\leqslant&
K_{\II}\,\sqrt{K+1}\,.
\end{eqnarray*}
The theorem follows from corollary~\ref{cor:ch4_sgn}.
\end{proof}

\begin{lemma}
\label{lem:ch4_baik}
Let $p$ be a point of a compact surface $M\subseteq\mathrm{H}^3$ with $K>0$ where
$\frac{K_{\II}}{K}$ achieves its absolute minimum and $\frac{H}{\dA}$ achieves its absolute maximum. Then $M$ is an extrinsic sphere.
\end{lemma}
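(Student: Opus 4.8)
The plan is to combine Lemma~\ref{lem:ch4_KII_HdetA} with a chain of pointwise inequalities in the spirit of the earlier proofs. Since the point $p$ is in particular a critical point of $\frac{H}{\dA}$, Lemma~\ref{lem:ch4_KII_HdetA} applies at $p$ and yields $\left.K_{\II}\right\vert_{(p)}\geqslant\left\lgroup\frac{H\,K}{\dA}\right\rgroup_{(p)}$; because $K>0$, this can be rewritten as $\left\lgroup\frac{K_{\II}}{K}\right\rgroup_{(p)}\geqslant\left\lgroup\frac{H}{\dA}\right\rgroup_{(p)}$.

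Next I would exploit the two extremal properties of $p$ together with the elementary estimate $H\geqslant\sqrt{\dA}$. The latter holds because $\dA=K+1>0$ forces the two principal curvatures to have the same sign, and with the orientation making $\II$ positive-definite they are both positive, so that $H>0$ and $H^{2}\geqslant\dA$ by the arithmetic--geometric mean inequality; hence $\frac{H}{\dA}\geqslant\frac{1}{\sqrt{\dA}}$. Consequently, for every $q\in M$,
\[
\left\lgroup\frac{K_{\II}}{K}\right\rgroup_{(q)}
\geqslant
\left\lgroup\frac{K_{\II}}{K}\right\rgroup_{(p)}
\geqslant
\left\lgroup\frac{H}{\dA}\right\rgroup_{(p)}
\geqslant
\left\lgroup\frac{H}{\dA}\right\rgroup_{(q)}
\geqslant
\left\lgroup\frac{1}{\sqrt{\dA}}\right\rgroup_{(q)}\,,
\]
which gives $K_{\II}\geqslant\frac{K}{\sqrt{\dA}}=\frac{K}{\sqrt{K+1}}$ throughout $M$.

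Finally, condition (i.) of Corollary~\ref{cor:ch4_sgn} is fulfilled, and therefore $M$ is an extrinsic sphere. (Alternatively, one may note that the function $K_{\II}-\frac{K}{\sqrt{\dA}}$ does not change sign and appeal directly to Theorem~\ref{thm:ch4_koutroufiotis}.)

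The only point demanding any care is the estimate $H\geqslant\sqrt{\dA}$ together with the sign of $H$; once the orientation conventions of Section~2 are recalled this is immediate, and the remainder is the by-now-familiar two-point comparison followed by an application of the sign corollary, so I do not anticipate a genuine obstacle.
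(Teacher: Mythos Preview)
Your proof is correct and follows essentially the same route as the paper: use Lemma~\ref{lem:ch4_KII_HdetA} at the common extremal point $p$ to get $\frac{K_{\II}}{K}\geqslant\frac{H}{\dA}$ there, propagate this to all of $M$ via the two extremal hypotheses, and then invoke Corollary~\ref{cor:ch4_sgn}. The paper records only the chain $\frac{K_{\II}}{K}\big\vert_{(q)}\geqslant\frac{K_{\II}}{K}\big\vert_{(p)}\geqslant\frac{H}{\dA}\big\vert_{(p)}\geqslant\frac{H}{\dA}\big\vert_{(q)}$ and leaves the passage to condition~(i.) of the corollary implicit; your extra step $\frac{H}{\dA}\geqslant\frac{1}{\sqrt{\dA}}$ (from $H\geqslant\sqrt{\dA}$) is exactly what is needed to bridge that gap, so your version is in fact slightly more explicit than the paper's.
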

\begin{proof}
Under the imposed assumptions, every point $q$ of $M$ satisfies
\[
\left.\frac{K_{\II}}{K}\right\vert_{(q)}
\geqslant
\left.\frac{K_{\II}}{K}\right\vert_{(p)}
\geqslant
\left.\frac{H}{\dA} \right\vert_{(p)}
\geqslant
\left.\frac{H}{\dA} \right\vert_{(q)}\,.
\]
The result follows at once from  corollary~\ref{cor:ch4_sgn}.
\end{proof}

\begin{theorem}
\label{thm:ch4_baik_F}
Let $M\subseteq\mathrm{H}^3$ be a compact surface with strictly positive Gaussian curvature. Assume that the condition $F(\frac{H}{\dA},\frac{K_{\II}}{K})=0$ is fulfilled for a function $F:\mathbb{R}^2\rightarrow\mathbb{R}:(u,v)\mapsto F(u,v)$ which satisfies the following requirements:
\[
\left\{
\begin{array}{c}
\displaystyle F_u > 0 \,;\\
\displaystyle F_v \geqslant 0\,;
\end{array}
\right.
\qquad
\textrm{or}
\qquad
\left\{
\begin{array}{c}
\displaystyle F_u \geqslant 0 \,;\\
\displaystyle F_v > 0\,.
\end{array}
\right.
\] 
Then $M$ is an extrinsic sphere. 
\end{theorem}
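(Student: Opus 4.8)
The plan is to copy the structure of the proof of Theorem~\ref{thm:ch4_thmsimon} essentially word for word, replacing the pair $(K,K_{\II})$ by the pair $\bigl(\frac{H}{\dA},\frac{K_{\II}}{K}\bigr)$ and replacing the lemma used there by Lemma~\ref{lem:ch4_baik}. Since $M$ has strictly positive Gaussian curvature, $\dA=K+1>0$, so both functions $\frac{H}{\dA}$ and $\frac{K_{\II}}{K}$ are well defined and continuous on the compact surface $M$, and hence attain their global extrema.

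I would argue by contradiction. Assume $M$ is not an extrinsic sphere, and pick a point $q\in M$ at which $\frac{H}{\dA}$ attains its global maximum and a point $r\in M$ at which $\frac{K_{\II}}{K}$ attains its global minimum. The key observation is that, by Lemma~\ref{lem:ch4_baik}, no single point can simultaneously be a maximum of $\frac{H}{\dA}$ and a minimum of $\frac{K_{\II}}{K}$, for otherwise $M$ would be an extrinsic sphere. Applying this once to $q$ and once to $r$ shows that $q$ is not a minimum of $\frac{K_{\II}}{K}$ and that $r$ is not a maximum of $\frac{H}{\dA}$, so that both strict inequalities
\[
\left.\frac{H}{\dA}\right\vert_{(q)}>\left.\frac{H}{\dA}\right\vert_{(r)}
\qquad\text{and}\qquad
\left.\frac{K_{\II}}{K}\right\vert_{(q)}>\left.\frac{K_{\II}}{K}\right\vert_{(r)}
\]
hold.

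With these two strict inequalities available, the contradiction would be immediate. Under the requirements $F_u>0$, $F_v\geqslant 0$ one would write
\[
0
=
F\!\left(\left.\tfrac{H}{\dA}\right\vert_{(q)},\left.\tfrac{K_{\II}}{K}\right\vert_{(q)}\right)
>
F\!\left(\left.\tfrac{H}{\dA}\right\vert_{(r)},\left.\tfrac{K_{\II}}{K}\right\vert_{(q)}\right)
\geqslant
F\!\left(\left.\tfrac{H}{\dA}\right\vert_{(r)},\left.\tfrac{K_{\II}}{K}\right\vert_{(r)}\right)
=
0\,,
\]
the strict step coming from $F_u>0$ and the first displayed inequality, and the weak step from $F_v\geqslant 0$ together with $\left.\frac{K_{\II}}{K}\right\vert_{(q)}\geqslant\left.\frac{K_{\II}}{K}\right\vert_{(r)}$. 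Under the requirements $F_u\geqslant 0$, $F_v>0$ one would run the same chain but extract the strict step instead from $F_v>0$ together with the second displayed inequality. Either way $0>0$, which is absurd, so $M$ is an extrinsic sphere.

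I do not anticipate any genuine obstacle: all of the analytic work is already encapsulated in Lemma~\ref{lem:ch4_baik} (and, through it, in Corollary~\ref{cor:ch4_sgn} and formula~(Q)). The one point that should not be glossed over is that \emph{both} of the strict inequalities above are needed --- one for each of the two admissible sign patterns on $(F_u,F_v)$ --- and that each of them requires a separate appeal to Lemma~\ref{lem:ch4_baik}; everything beyond that is a routine monotonicity argument.
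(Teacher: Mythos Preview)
Your proposal is correct and is precisely the approach the paper intends: its own proof reads in full ``Similar to the proof of theorem~\ref{thm:ch4_thmsimon},'' and you have faithfully carried out that adaptation, substituting the pair $\bigl(\tfrac{H}{\dA},\tfrac{K_{\II}}{K}\bigr)$ for $(K,K_{\II})$ and invoking Lemma~\ref{lem:ch4_baik} in place of the earlier lemma. If anything you are more explicit than the paper, since you spell out why both strict inequalities hold (one from applying the lemma at $q$, one at $r$), which is exactly what is needed to cover both sign patterns on $(F_u,F_v)$.
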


\begin{proof}
Similar to the proof of theorem \ref{thm:ch4_thmsimon}.
\end{proof}

\begin{flushright}
\begin{it}
\begin{footnotesize}
Address:\\ 
K.U.Leuven\\ 
Departement Wiskunde\\ 
Afdeling Meetkunde\\
Celestijnenlaan 200B bus 2400\\
3001 Heverlee\\ 
Belgium\\
$ $ \\
E-mail:\\
\verb|steven.verpoort@wis.kuleuven.be|\\
\end{footnotesize}
\end{it}
\end{flushright}

\end{document}